\documentclass[11pt,twoside]{amsart}
\usepackage{amsmath,amssymb,amsthm,stmaryrd}
\usepackage{mathrsfs}
\usepackage{nicefrac}
\usepackage{graphicx}
\usepackage[dvipsnames]{xcolor}
\usepackage{hyperref}

\makeatletter
\@namedef{subjclassname@2020}{%
  \textup{2020} Mathematics Subject Classification}
\makeatother

\numberwithin{equation}{section}

\newtheorem{Thm}{Theorem}[section]
\newtheorem{Lem}[Thm]{Lemma}
\newtheorem{slem}[Thm]{Sublemma}
\newtheorem{Pro}[Thm]{Proposition}
\newtheorem{Cor}[Thm]{Corollary}

\newcommand{\R}{{\mathbb R}}
\newcommand{\N}{{\mathbb N}}

\newcommand{\al}{\alpha}

\newcommand{\eps}{\epsilon}

\newcommand{\ga}{\gamma}

\newcommand{\om}{\omega}

\renewcommand{\rho}{\varrho}
\newcommand{\si}{\sigma}

\newcommand{\area}{\operatorname{area}}

\newcommand{\Jac}{\operatorname{Jac}}

\newcommand{\length}{\operatorname{length}}

\renewcommand{\d}{\partial}

\newcommand{\CAT}{\operatorname{CAT}}

\newtheorem*{premainthm}{Theorem}
\newtheorem*{mainthm}{Main Theorem}

\newcommand{\bcor}{\begin{Cor}}
\newcommand{\ecor}{\end{Cor}}
\newcommand{\ben}{\begin{enumerate}}
\newcommand{\bit}{\begin{itemize}}
\newcommand{\blem}{\begin{Lem}}
\newcommand{\bslem}{\begin{slem}}
\newcommand{\bprop}{\begin{Pro}}
\newcommand{\bthm}{\begin{Thm}}
\newcommand{\een}{\end{enumerate}}
\newcommand{\eit}{\end{itemize}}
\newcommand{\elem}{\end{Lem}}
\newcommand{\eslem}{\end{slem}}
\newcommand{\eprop}{\end{Pro}}
\newcommand{\ethm}{\end{Thm}}

\begin{document}

\title[Least-area discs]{Existence of least-area discs}

\author{Alexander Lytchak}
%
\author{Stephan Stadler}
%
\date{\today}

\begin{abstract}
We prove that every closed Lipschitz curve in Euclidean space extends
to a Lipschitz disc of least area, with Lipschitz constant controlled
by that of the boundary curve. Our proof gives a new
approach to the classical Douglas--Radó solution of the Plateau problem but accommodates arbitrary
self-intersections and avoids Sobolev maps.
The argument extends to metric spaces with curvature bounded above
in the sense of Alexandrov.
\end{abstract}

\maketitle

\section{Introduction}

We prove the following Lipschitz version of Plateau's problem.

\begin{premainthm}\label{thm_euclidean}
Every $L$-Lipschitz map $\gamma\colon S^1\to\R^n$ admits a
$5L$-Lipschitz extension $u\colon\bar D\to\R^n$ of least parametrized
area among all Lipschitz extensions of $\gamma$.
\end{premainthm}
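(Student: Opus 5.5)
The plan is to take a minimizing sequence of Lipschitz extensions and upgrade it, by local replacement, to a minimizing sequence that is \emph{uniformly} Lipschitz; Arzelà--Ascoli then gives a limit, and lower semicontinuity of parametrized area finishes the argument. Let $A$ be the infimum of $\area(u)$ over all Lipschitz extensions $u$ of $\gamma$. First I will record two facts.
\begin{itemize}
\item Parametrized area, $\area(u)=\int_D \Jac(u)$, is lower semicontinuous under uniform convergence of maps with uniformly bounded Lipschitz constants. This follows from weak-$*$ convergence of the derivatives together with polyconvexity of the area integrand; equivalently, from semicontinuity of the Hausdorff measure counted with multiplicity.
\item Every Lipschitz curve bounds a Lipschitz disc with a quadratic isoperimetric bound, namely the cone over it: $\area\le \frac{1}{4\pi}\length^2$ up to a constant, using the coning inequality in $\R^n$ (and in $\CAT(0)$ spaces).
\end{itemize}

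The key step is a regularization lemma. Given any Lipschitz extension $u$ with $\area(u)\le A+\eps$, I want to produce an extension $v$ that is $5L$-Lipschitz and satisfies $\area(v)\le \area(u)+\eps'$, where $\eps'\to0$ as $\eps\to0$. My first attempt is the following.
\begin{enumerate}
\item Reparametrize $u$ conformally-ish. Pull back the metric and use a discretization. Cover $\bar D$ by a fine triangulation, and on each small cell replace $u$ by a "best" filling of the image of its boundary.
\item Use almost-minimality of $u$. This forces every subdisc $\Omega$ to have area at most that of any competitor filling $u|_{\d\Omega}$, up to $\eps$. Combined with the isoperimetric inequality, this yields
\[
\area(u|_\Omega)\le C\length(u|_{\d\Omega})^2+\eps .
\]
\item Feed this into a Courant--Lebesgue type argument. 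It gives control of the lengths of images of circles (and arcs) around every point, up to $\eps$-errors.
\end{enumerate}
From this length control I would construct a new parametrization on which the map is Lipschitz. Concretely, I take the intrinsic "area-length" metric induced on the disc, check that it is bi-Lipschitz comparable with a flat disc with constants independent of $u$, and then re-map. The constant $5$ should arise from matching the boundary speed $L$ to the geometry: boundary arcs of length $\ell$ on $S^1$ must be filled with controlled distortion.

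Alternatively, and perhaps more robustly, I would take a limit first. Normalize the minimizing sequence $u_k$ by a three-point condition, so the boundary parametrization cannot degenerate under Möbius transformations. Then prove equicontinuity via the Courant--Lebesgue lemma: for each $z$ and $\delta$ there is a radius $r\in(\delta,\sqrt\delta)$ such that $u_k(\d B_r(z))$ has length at most $C\sqrt{A/\log(1/\delta)}$. By almost-minimality and the isoperimetric bound, the disc inside then has small image (via filling-radius). This needs the extension to be "energy-minimizing" in a conformal parametrization. I would obtain such a parametrization by precomposing with diffeomorphisms that nearly minimize a Lipschitz-type energy, which keeps area unchanged.

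A limit map $u$ obtained this way is continuous and minimizing. I then show that it is harmonic-like, i.e.\ locally a least-area filling. After that I upgrade continuity to the Lipschitz bound $5L$ by a comparison argument. For a point $x\in D$ near the boundary, I compare $u$ with the cone or convex-hull filling: the image lies in the convex hull of $\gamma$ (the nearest-point projection does not increase area). Then a barrier argument using the subharmonicity of $|u-p|^2$ in conformal coordinates, together with boundary speed $L$, bounds the gradient by a constant multiple of $L$.

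I expect the main obstacle to be exactly this last step: obtaining a \emph{uniform} Lipschitz bound, rather than mere continuity, while avoiding Sobolev theory. Interior gradient bounds for conformal harmonic maps are fine, but bounds up to the boundary with the explicit constant $5L$ require a careful reparametrization. What I would try first is to choose the parametrization so that the boundary is traversed proportionally to arclength, then bound $|\nabla u|$ by comparison with the Poisson extension of $\gamma$, which is $CL$-Lipschitz, and track constants.
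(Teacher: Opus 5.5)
Your outer architecture is sound: regularize an almost minimizing extension into a $5L$-Lipschitz one without increasing area, then apply Arzel\`a--Ascoli and lower semicontinuity. But that regularization step is the entire content of the theorem, and neither of your routes supplies it.

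In the first route, a Courant--Lebesgue argument needs a Dirichlet energy bound in a well-chosen parametrization. Area is invariant under precomposition with bi-Lipschitz homeomorphisms of $\bar D$ fixing $S^1$, so for an arbitrary near-minimizer an area bound says nothing about $\length(u|_{\d B_r(z)})$. Moreover, the intrinsic geometry of (near-)minimizers is \emph{not} bi-Lipschitz to a flat disc with constants depending only on $L$. If $\ga$ parametrizes the boundary of a planar rectangle of perimeter $2\pi L$ and width $\delta\to0$, the least area tends to $0$. By contrast, a disc with boundary length $2\pi L$ that is $C$-bi-Lipschitz to a round disc has area at least $\pi L^2/C^4$. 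A cusp-shaped planar region admits no bi-Lipschitz disc structure at all. For self-intersecting $\ga$, which the theorem allows but your proposal never treats, the intrinsic space need not even be a disc.

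In the second route, the three-point normalization exists only because in Douglas--Rad\'o the boundary parametrization is a free variable. With $\ga$ prescribed, a conformal parametrization has boundary values $\ga\circ\phi$ for an unknown monotone $\phi$, and getting back to $\ga$ with a Lipschitz bound is the original problem. The gradient bound also fails, for three reasons:
\begin{itemize}
\item For $\ga$ the boundary of a square, the conformal least-area disc is a Riemann map onto the square, whose derivative blows up like $|z-z_0|^{-1/2}$ at preimages of corners.
\item The Poisson extension of Lipschitz data is in general not Lipschitz; for data with a corner, such as $|x|$ on the real line, its gradient has a logarithmic singularity.
\item Subharmonicity of $|u-p|^2$ only gives $C^0$ bounds.
\end{itemize}

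The missing idea is that no conformal or bi-Lipschitz structure is needed. You only need a Lipschitz \emph{monotone} parametrization of the right intrinsic disc, because a monotone map computes area as $\mathcal H^2$ of its image. The paper proceeds as follows for embedded $\ga$.
\begin{itemize}
\item Start from a Lipschitz extension $u$; the paper uses a minimizer among $N$-Lipschitz extensions, though minimality is not used in this step. Replace $u$ by a length-minimizing disc $v$ below it, so $\area(v)\le\area(u)$ by Lemma~\ref{lem_area_mon}.
\item By Theorem~\ref{thm_memi}, the intrinsic space $Z_v$ is a $\CAT(0)$ disc with $\mathcal H^2(Z_v)=\area(v)$, and $\pi_v|_{S^1}$ is an $L$-Lipschitz parametrization of $\d Z_v$.
\item The geodesic cone $\mu\colon\bar D\to Z_v$ from an interior point (Lemma~\ref{lem_mon}) is monotone. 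It stretches radii by at most $\operatorname{diam} Z_v\le\pi L$, and by $\CAT(0)$ comparison it stretches concentric circles no more than the boundary. Hence $\mu$ is $\frac{3\pi}{2}L$-Lipschitz.
\item Therefore $\bar v\circ\mu$ is a $5L$-Lipschitz extension of $\ga$ with $\area(\bar v\circ\mu)\le\mathcal H^2(Z_v)\le\area(u)$.
\end{itemize}
This is precisely your regularization lemma with $\eps'=0$, and it explains the constant $5>\frac{3\pi}{2}$. Self-intersecting curves are then reduced to the embedded case. One uses the perturbations $\theta\mapsto(\ga(\theta),\eps_k\theta)$ in $\R^n\times\R^2$, nearest-point projection onto $\R^n\times\bar B_{\eps_k}(0)$, and a limit.
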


The boundary parametrization is prescribed, and the curve may have
arbitrary self-intersections. The proof uses Alexandrov geometry
to obtain the uniform Lipschitz estimate needed for a direct
minimizing-sequence argument.

The classical Plateau problem asks for a disc of least area spanning
a given Jordan curve. Douglas and Radó independently solved this
problem by an ingenious argument based on minimizing Dirichlet energy
\cite{D_plateau,R_plateau}.

Morrey extended this approach to Riemannian targets \cite{M_plateau}. The energy method produces solutions which are continuous on the closed disc and smooth in its interior.
It does not address self-intersecting boundary curves, nor does it produce a Lipschitz solution with a prescribed boundary parametrization.
For self-intersecting curves, Hass constructed singular minimal discs
by a cut-and-paste argument \cite{Ha_sing}. Building on the metric
Plateau theory of Lytchak and Wenger \cite{LWPlateau}, Creutz obtained
Hölder-continuous fillings \cite{Cr_sing}. Our result improves the
regularity to Lipschitz continuity and gives an explicit bound in
terms of the prescribed boundary map.

Our argument naturally allows for targets with curvature
bounded above in the sense of Alexandrov.

\begin{mainthm}\label{thm_main}
Let $X$ be a $\CAT(\kappa)$ space and let $c\colon S^1\to X$ be
$L$-Lipschitz. If $\kappa>0$, assume that the image of $c$ is contained
in a ball of radius at most $\pi/(2\sqrt{\kappa})$.
Then $c$ admits a $10L$-Lipschitz extension
$f\colon\bar D\to X$ of least parametrized area among all Lipschitz
extensions of $c$.
\end{mainthm}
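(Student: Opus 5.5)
The plan is a direct-method argument: show that any Lipschitz extension of $c$ can be replaced, without increasing area, by a $10L$-Lipschitz extension; then take limits. Concretely, let $A_0$ be the infimum of parametrized areas over all Lipschitz extensions of $c$, and choose a minimizing sequence $f_k$. The crux is a \emph{Lipschitz improvement} step: for each $k$ we want a $10L$-Lipschitz extension $g_k$ of $c$ with $\area(g_k)\le \area(f_k)+1/k$. Granting this, Arzel\`a--Ascoli applies, since all $g_k$ are $10L$-Lipschitz and have images in a fixed bounded set, namely a bounded neighborhood of $c(S^1)$, because Lipschitz bounds control distance to the boundary. It gives a uniformly convergent subsequence $g_k\to f$, and $f$ is a $10L$-Lipschitz extension of $c$. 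We then need lower semicontinuity of parametrized area under uniform convergence of equi-Lipschitz maps into $X$. For this I would invoke the standard result for CAT($\kappa$) targets, which have the property that the metric differential is induced by a seminorm coming from an inner product (Euclidean-type area). Combined with Kirchheim/Korevaar--Schoen type semicontinuity, it gives $\area(f)\le\liminf \area(g_k)=A_0$. Hence $f$ is a minimizer.

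For the improvement step I would proceed as follows.
\begin{enumerate}
\item Since $\kappa>0$ forces $c$ into a ball of radius $\le\pi/(2\sqrt\kappa)$, use the $1$-Lipschitz nearest-point projection onto that (convex) ball. Compose with it, so we may assume all maps land in the ball, where CAT($\kappa$) comparison holds with controlled constants. The extra factor $2$ compared with $5L$ in the Euclidean theorem presumably comes from distortion in this region.
\item Replace $f_k$ by a map whose pullback metric is a length metric $d_f$ on $\bar D$. Then pass to the \emph{intrinsic minimal surface}: a filling of least area should be obtained from an Alexandrov-type model. Heuristically, a minimal disc in a CAT($\kappa$) space with its intrinsic metric is itself CAT($\kappa$) and saddle-like.
\item Reparametrize that model surface conformally onto $\bar D$ with boundary parametrization close to the one given. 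The boundary then has length $\le 2\pi L$, and the key estimate is that a CAT($\kappa$) disc with boundary curve of length $\ell$ admits a parametrization from the unit disc with Lipschitz constant $\lesssim \ell/2\pi$. The constant $5$ would come from a Lipschitz bound for conformal maps onto nonpositively curved discs with rectifiable boundary, via a Beurling/Koebe-type distortion or isoperimetric estimate.
\end{enumerate}

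The main obstacle is step 3, that is, producing a parametrization with Lipschitz constant controlled \emph{linearly} by $L$ while keeping the prescribed boundary map $c$ rather than a reparametrization of it. My first attempt would be a collar construction. Take the conformal/harmonic parametrization on a smaller disc of radius $1/2$. Then interpolate on the annulus $\tfrac12\le|z|\le1$ between the reparametrized boundary and $c$ by geodesic homotopies, which exist and are Lipschitz in CAT($\kappa$) with radius $<\pi/(2\sqrt\kappa)$. This costs arbitrarily small area if the two boundary parametrizations are uniformly close, which we can arrange by first approximating. Convexity of the distance function bounds the Lipschitz constant of the collar by a multiple of $L$, and tallying the constants should yield $10L$.
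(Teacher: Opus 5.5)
The outer scheme is sound: upgrade each $f_k$ to a uniformly Lipschitz extension of no larger area, then pass to a limit. But the upgrade step is the entire content of the theorem, and the route you sketch for it does not work.

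\textbf{Step~2.} You never say whose intrinsic space you use. The intrinsic space of an arbitrary $f_k$ has no curvature bound. The intrinsic space of a least-area disc presupposes the conclusion, unless you import the Sobolev Plateau theory and the Lytchak--Wenger intrinsic-structure theorem; you do not, and the paper deliberately avoids them. The missing idea is to pass to a \emph{length-minimizing} disc below $f_k$:
\begin{itemize}
\item There is $v$ with $v|_{S^1}=c$ and $\length(v\circ\ga)\le\length(f_k\circ\ga)$ for every curve $\ga$ (Lemma~\ref{lem_exist}), so $\area(v)\le\area(f_k)$ (Lemma~\ref{lem_area_mon}).
\item If the boundary is embedded, $Z_v$ is a $\CAT(\kappa)$ disc, $\pi_v$ is monotone, and $\area(v)=\mathcal H^2(Z_v)$ (Theorem~\ref{thm_memi}).
\end{itemize}

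\textbf{Step~3.} Conformal parametrizations are not Lipschitz up to the boundary, even for flat $\CAT(0)$ discs. By Schwarz--Christoffel, the Riemann map onto a square has $|f'(z)|\sim|z-z_j|^{-1/2}$ near each prevertex. So no Koebe- or Beurling-type estimate yields your bound. The collar does not repair this. The induced boundary homeomorphism $\phi$ is in general neither close to the identity nor Lipschitz (in the square example $\phi'$ blows up at the prevertices). Hence $c\circ\phi$ on the inner circle is not Lipschitz, and the geodesic homotopy to $c$ has neither small area nor a controlled Lipschitz constant.

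\textbf{What works instead} is elementary (Lemma~\ref{lem_mon}): cone off the $L$-Lipschitz boundary map $\pi_v|_{S^1}$ by geodesics from an interior point of $Z_v$. In the $\CAT(0)$ case, comparison shows concentric circles are not stretched and radii are stretched by at most $\operatorname{diam}Z_v\le\pi L$. So the cone map $\mu$ is $5L$-Lipschitz and extends the \emph{prescribed} boundary map directly, with no collar. It is monotone, hence $\area(\mu)=\mathcal H^2(Z_v)$. Since $\bar v$ is $1$-Lipschitz, $\area(\bar v\circ\mu)\le\area(v)\le\area(f_k)$. Monotonicity matters here: a parametrization covering parts of $Z_v$ with multiplicity would gain area.

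\textbf{Two further gaps.}
\begin{itemize}
\item The theorem allows self-intersecting $c$. Then $Z_v$ need not be a disc, since boundary points with equal image may be identified, so there is no disc model to parametrize. The paper proves the embedded case first. It then handles general $c$ via the embedded curves $c_k=(c,\eps_k\theta)$ in $X\times\R^2$: it projects minimizers into $X\times\bar B_{\eps_k}(0)$ and compares $c_k$ with $c$ through annuli of vanishing area.
\item $X$ need not be proper, so images in a bounded set do not give Arzel\`a--Ascoli. One takes an ultralimit in $X_\om$ and composes with a $1$-Lipschitz retraction $X_\om\to X$ (Lemma~\ref{lem_AA}). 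For $\kappa>0$ this retraction, like the one in your step~1, cannot be the nearest-point projection, which is not $1$-Lipschitz. For example, projecting the round sphere onto a closed hemisphere stretches the latitude circle at distance $t$ below the equator by $1/\cos t$. The Lytchak--Petrunin retraction is used instead.
\end{itemize}
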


The main obstacle to a direct proof is compactness: a bound on the
areas of Lipschitz fillings gives no control over their Lipschitz
constants. We overcome this by first minimizing area among fillings
with a prescribed Lipschitz bound. We then pass to a length-minimizing
disc below such a minimizer and use its intrinsic geometry to
construct a new parametrization. This preserves the prescribed
boundary map, does not increase area, and has Lipschitz constant
controlled solely by the boundary curve. Letting the initial
Lipschitz bound tend to infinity now yields a least-area filling.

For comparison, White proved that, for $k\geq2$, every Lipschitz map
$S^k\to\R^n$ admits a Lipschitz extension to the ball that minimizes
area among all integral current fillings, and hence among all
Lipschitz ball fillings \cite{Whi-exist}. This stronger conclusion
fails for $k=1$: a filling of higher genus can have strictly smaller
area than every disc filling \cite[p.~56]{Law_min}.

\subsection{AI usage statement}
The proof was found without any use of AI.

\section{Plateau's problem}

\subsection{Notation}

We denote the open unit disc in the Euclidean plane $\R^2$ by $D$ and its closure by $\bar D$.
We denote the distance in a metric space $X$ by $d$ and 
the open ball in $X$ of radius $r$ and center $x\in X$ by $B_r(x)$.
If closed balls in $X$ are compact, then $X$ is called {\em proper}.
For $n\geq 0$, the $n$-dimensional Hausdorff measure on $X$ is denoted by $\mathcal H^n$. 
A continuous curve $c$ in $X$ is called {\em rectifiable} if its length, denoted by $\length(c)$, is finite. A \emph{geodesic} is an isometric map  from an interval into $X$. 
The space $X$ is called \emph{a geodesic space} if any pair of points in $X$ is connected by a geodesic.
Let $\kappa$ be a real number. Recall that a CAT($\kappa$) space is a complete metric space $X$ where any pair of points at distance strictly less than $\pi/\sqrt{\kappa}$ is connected by a geodesic and such that distances between points on a geodesic triangle of perimeter strictly less than $2 \pi/\sqrt{\kappa}$
are bounded above by the distances between corresponding points on the comparison triangle in the simply connected surface of constant curvature $\kappa$. (For $\kappa\leq 0$ the restrictions on distance and perimeter are understood to be vacuous.
We restrict our presentation to the case $\kappa=0$. The general case only requires minor modifications, relying on \cite{LyWa_min} and \cite{LP_retract} instead of \cite{PS}. For background on $\CAT(\kappa)$ geometry we refer the reader to \cite{AKP, ballmannbook, BriH}.


A {\em $\CAT(0)$ disc} $Z$ is a $\CAT(0)$  space homeomorphic to the closed unit disc $\bar D$. Recall that a map $f:Y\to Z$ between topological spaces is called {\em monotone}, if its fibers are connected. 

\blem\label{lem_mon}
Let $Z$ be a $\CAT(0)$  disc 
and $\eta\colon S^1\to\d Z$ an $L$-Lipschitz boundary parametrization. Then there exists a monotone $5L$-Lipschitz map $\mu\colon\bar D\to Z$ extending $\eta$.
\elem

\proof
Let $z$ be a point in the interior of $Z$ and define $\mu\colon\bar D\to Z$ by sending every radial linear segment from $0$ to a point $\theta$ in $S^1$ with constant speed to the geodesic from $z$ to $\eta(\theta)$. Then $\mu$ is monotone by \cite[Lemma~30]{Sta}. To estimate the Lipschitz constant we may rescale $Z$ such that $L=1$. Then the $\CAT(0)$  property implies that $\mu$ does not increase the length of distance circles around $0$. Moreover, $\mu$ can stretch a radial segment at most by a factor equal to the diameter of $Z$ which is at most $\pi$. Thus, for points $v,w\in S^1$ with an angle $\al$ between them and $0\leq t\leq s\leq 1$ we have
\begin{align*}
  d(\mu(tv),\mu(sw))&\leq t\al+(s-t)\pi\\
  &\leq \frac{\pi}{2}|tv-sw|+\pi|tv-sw|=\frac{3\pi}{2}|tv-sw|.
\end{align*}
This proves the claim.
\qed

\subsection{Lipschitz maps and area}

Let $X$ be a complete metric space. A map $u\colon D\to X$ is {\em metrically differentiable} at a point $x\in D$, if there exists a semi-norm $\si$ on $\R^2$ such that 
\[\lim\limits_{|a|+|b|\to 0}\frac{d(u(x+a),u(x+b))-\si(a-b)}{|a|+|b|}=0.\]
We call $\si$ the {\em metric differential} of $u$ at $x$ and denote it by $|du_x|$. 

Let
$u\colon D\to X$ be a Lipschitz map.
By \cite[Theorem~2]{Kirch}, $u$
is metrically differentiable at almost all points in $D$.
Its {\em (parameterized Hausdorff) area} is defined by 
\[\area(u):= \int_D \Jac(|du_x|)\,dx,\] 
where the Jacobian $\Jac(s)$ of a semi-norm $s$ on $\R^2$ is the Hausdorff $2$-measure in $(\R^2, s)$ of the Euclidean unit square if $s$ is a norm and $\Jac(s)=0$ otherwise. By \cite[Theorem~7]{Kirch}, for every Lebesgue measurable subset $A\subset\R^2$, $u$ satisfies the following area formula.
\[\int_A \Jac(|du_x|)\,dx=\int_X \# (u^{-1}(y)\cap A)\, d\mathcal H^2(y).\]
In particular, if $u$ is monotone, then $\#u^{-1}(y)=1$ for $\mathcal H^2$-almost all points $y$ in the image of $u$ and we have 
\[\int_A \Jac(|du_x|)\,dx=\mathcal H^2(u(A)).\]


\blem\label{lem_area_mon}
Let $X$ be a complete metric space and $u^\pm\colon \bar D\to X$  Lipschitz maps. 
Suppose
\[\length(u^-\circ\ga)\leq\length(u^+\circ\ga)\]
holds for every curve $\ga$ in $\bar D$.
Then, $\area(u^-)\leq \area(u^+)$ holds.
\elem

\proof
By \cite[Proposition~4.10]{LWPlateau}, the assumption implies $|du^-_x|\leq |du^+_x|$ for almost all $x\in D$. At each such point, we have 
$\Jac(|du^-_x|)\leq \Jac(|du^+_x|)$ and the claim follows by integration.
\qed
\medskip

The following result is as a replacement for Arzelà-Ascoli in case our space $X$ is not proper.
It is well-known, see \cite[Section~4]{Guo} or \cite{IW_ultra} for the most general result. Its proof requires the concept of ultralimits of metric spaces, we refer the reader to \cite[Section~I.5]{BriH}. For convenience of the reader, we sketch the proof.

\blem\label{lem_AA}
Let $X$ be a $\CAT(0)$ space. Let $c\colon S^1\to X$ be a $L$-Lipschitz map and $u_k\colon \bar D\to X$ a sequence of $L$-Lipschitz extensions of $c$ with $\lim_{k\to\infty}\area(u_k)=A$. Then, there exists an $L$-Lipschitz extension $u\colon \bar D\to X$ of $c$ with $\area(u)\leq A$.
\elem

\proof[Sketch of proof]
An ultralimit $u_\om=\om\lim u_k$ is a $L$-Lipschitz
 into the ultracompletion of $X$, $u_\om\colon\bar D\to X_\om$. The map $u_\om$ can be realized as follows. We can choose isometric embeddings $\iota_k$ of the images of $u_k$ into a common complete metric space $Y$ and produce a uniform limit $v\colon\bar D\to Y$ of a subsequence $(\iota_{k_l}\circ u_{k_l})$.  The image of $v$ isometrically embeds into $X_\om$ via a map $j$. Then, $u_\om=j\circ v$. By lower semicontinuity of area \cite[Corollary~5.8]{LWPlateau}, we have $\area(u_\om)=\area(v)\leq\liminf_{l\to\infty}\area(u_{k_l})=A$. Since $X$ embeds canonically as a closed convex subset $X\hookrightarrow X_\om$, there is a $1$-Lipschitz retraction $\pi\colon X_\om\to X$.
 Thus, $u=\pi\circ u_\om$ is as required.
\qed

\blem\label{lem_M_lip}
Let $X$ be a $\CAT(0)$ space. Let $c\colon S^1\to X$ be an $L$-Lipschitz map. Then for every $M\geq L$ there exists an $M$-Lipschitz extension $u_M\colon \bar D\to X$ of least area among all $M$-Lipschitz extensions. 
\elem

\proof
By Kirszbraun's theorem \cite{LS_kirszbraun}, there exists an $L$-Lipschitz extension $u\colon \bar D\to X$.
Let $A\geq 0$ be the infimum of areas of $M$-Lipschitz extensions of $c$. Let $u_k\colon\bar D\to X$ be a sequence of $M$-Lipschitz extensions of $c$ with $\lim\limits_{k\to\infty}\area(u_k)=A$. Then Lemma~\ref{lem_AA} completes the proof.
\qed

\subsection{Length-minimizing discs}

We refer the reader to \cite{BBI, LWint,PS} for a more general  discussion of the following construction.
Let $Y$ be a geodesic space, $X$ a  metric space and $u\colon Y\to X$ a Lipschitz map. The {\em intrinsic distance associated with $u$}
is the function $d_u:Y\times Y\to [0,\infty ]$
 defined by
\[d_u(y_1,y_2)=\inf\{\length(u\circ\ga)\}\]
where the infimum is taken over all curves $\ga$ in $Y$ connecting $y_1$ and $y_2$.
It defines a pseudo-metric and the associated metric space $Z_u$
which arises from  identifying pairs of points at zero $d_u$-distance, is a length
space.  We will call it the \emph{intrinsic metric space (associated with the map $u$)}.

By construction, the space $Z_u$ comes with a canonical, surjective Lipschitz projection $\pi_u\colon Y\to Z_u$ and
a $1$-Lipschitz map $\bar u:Z_u\to X$ such that $u=\bar u\circ \pi_u$.

Let $X$ be a metric space.
A Lipschitz map $v\colon \bar D\to X$ is called {\em length-minimizing}, if for every continuous map $w\colon \bar D\to X$ which satisfies $w|_{S^1}=v|_{S^1}$ the condition
\[\length(w\circ\ga)\leq\length(v\circ\ga)\]
for every curve $\ga$ in $\bar D$ can only hold if equality holds for every curve $\ga$.
The following result produces a length-minimizing disc below a given Lipschitz disc, see \cite[Lemma~5.1]{LyWa_min} and \cite[Proposition~5.1]{PS}.

\blem\label{lem_exist}
Let $X$ be a $\CAT(0)$ space and $u\colon\bar D\to X$ a Lipschitz map. Then there exists a length-minimizing map  $v\colon\bar D\to X$ with
\[\length(v\circ\ga)\leq\length(u\circ\ga)\]
for every curve $\ga$ in $\bar D$.
\elem


The next result is a special case of \cite[Main Theorem and Proposition~3.1]{PS}. In the present setting, the proof given in \cite{PS} becomes considerably shorter and simpler, since our maps are assumed a priori to be Lipschitz.


\bthm\label{thm_memi}
Let $X$ be a $\CAT(0)$ space and $u\colon\bar D\to X$ a Lipschitz continuous length-minimizing map with embedded boundary $u|_{S^1}$. Then the natural factorization $u= \bar u\circ\pi_u$ through the intrinsic space $Z_u$ satisfies:
\begin{itemize}
\item $Z_u$ is a $\CAT(0)$ disc;
    \item $\pi_u$ is a surjective monotone Lipschitz map with
\[\length(u\circ\ga)=\length(\pi_u\circ\ga)\]
    for every curve $\ga$ in $\bar D$, in particular, $\area(u)=\area(\pi_u)=\mathcal H^2(Z_u)$.
\end{itemize}
\ethm

\subsection{Main Theorem}

In this section, we prove the Main Theorem in the case of $\CAT(0)$ spaces. The modifications required for $\CAT(\kappa)$ spaces with $\kappa\neq 0$ are standard, except that in Lemma~\ref{lem_AA} one uses the Lytchak--Petrunin retraction \cite[Theorem~1.1]{LP_retract} in place of the nearest-point projection.

\proof[Proof of Main Theorem for $\CAT(0)$]
Let $c\colon S^1\to X$ be an $L$-Lipschitz map.
We first treat the case where $c$ is an embedding.
Fix a natural number $N\geq 5L$.
By Lemma~\ref{lem_M_lip}, there exists a Lipschitz map $u\colon\bar D\to X$ with $u|_{S^1}=c$ which minimizes area among all $N$-Lipschitz extensions of $c$. By Lemma~\ref{lem_exist}, there exists a length-minimizing map  $v\colon\bar D\to X$ with $v|_{S^1}=u|_{S^1}=c$ and such that 
\[\length(v\circ\ga)\leq\length(u\circ\ga)\]
for every curve $\ga$ in $\bar D$. We infer from Lemma~\ref{lem_area_mon} that $\area(v)\leq\area(u)$. Thus $v$ is also area minimizing among $N$-Lipschitz extensions of $c$.  By Theorem~\ref{thm_memi},
$v=\bar v\circ\pi_v$ with 
\[\pi_v\colon\bar D\to Z_v\ \text{ and }\ \bar v\colon Z_v\to X.\]
The intrinsic space $Z_v$ is a $\CAT(0)$ disc with $\mathcal H^1(\d Z_v)=\length(c)$ and $\mathcal H^2(Z_v)=\area(v)$. 
$\pi_v$ restricts to an $L$-Lipschitz parametrization of $\d Z_v$.
Lemma~\ref{lem_mon} provides a surjective monotone $5L$-Lipschitz map $\mu\colon \bar D\to Z_v$ extending $\pi_v|_{S^1}$. Thus, the map $f_N\colon \bar D\to X$ with $f_N=\bar v\circ \mu$ is a $5L$-Lipschitz extension of $c$ and satisfies 
\[\area(f_N)\leq\area(\mu)=\area(v).\]
In particular, since $5L\leq N$, $f_N$ minimizes area among all $N$-Lipschitz extensions of $c$.
We define $f\colon\bar D\to X$ as a subsequential  uniform limit of the sequence $(f_N)$. Note that $\area(f_N)\geq\area(f_{N+1})$ for all $N\in\N$. Lower semicontinuity of area (\cite[Corollary~5.8]{LWPlateau}) implies
\[\area(f)\leq\inf_{n\in\N}\area(f_N).\]
It follows that $f$ is a $5L$-Lipschitz continuous extension of $c$ which minimizes area among all Lipschitz extensions and the proof for embedded curves $c$ is complete.

Now let $c$ be arbitrary. Choose $\eps_k\to 0$ and define
$c_k(\theta)=(c(\theta),\eps_k\theta)$
as auxiliary curves in the $\CAT(0)$ space $X\times\mathbb R^2$. The curves $c_k$
are embedded and $(L+\eps_k)$-Lipschitz. Let
\[
g_k:\bar D\to X\times\mathbb R^2
\]
be $M$-Lipschitz extensions of $c_k$ minimizing area among all Lipschitz extensions of $c_k$.
Since $c_k$ has image in the closed convex set $X\times \bar B_{\eps_k}(0)$, and since the nearest point projection to
a closed convex subset in a $\CAT(0)$ space is $1$-Lipschitz, we may assume that $g_k$ has image in $X\times \bar B_{\eps_k}(0)$. Passing to a subsequence, we obtain a uniform limit map $g\colon \bar D\to X$ which is $5L$-Lipschitz and extends $c$.
The curves $c_k$ and $c$ bound Lipschitz annuli whose areas converge to zero as $k\to\infty$.
Since $g_k$ are area minimizing extensions so is $g$. The proof is complete.
\qed

\bibliographystyle{alpha}
\bibliography{plateau}


\end{document}